\documentclass[reqno, 12pt]{amsart}
\pdfoutput=1

\def\ssign{\textsection\nobreak\hspace{1pt plus 0.3pt}}
\makeatletter
\let\origsection=\section 
\def\mysection{\@mystartsection{section}{1}\z@{.7\linespacing\@plus\linespacing}{.5\linespacing}{\normalfont\scshape\centering\ssign}}
\def\section{\@ifstar{\origsection*}{\mysection}}
\def\appendix{\par\c@section\z@ \c@subsection\z@
	\let\sectionname\appendixname
	\let\section=\origsection
	\def\thesection{\@Alph\c@section}} 
\def\@mystartsection#1#2#3#4#5#6{\if@noskipsec \leavevmode \fi
	\par \@tempskipa #4\relax
	\@afterindenttrue
	\ifdim \@tempskipa <\z@ \@tempskipa -\@tempskipa \@afterindentfalse\fi
	\if@nobreak \everypar{}\else
	\addpenalty\@secpenalty\addvspace\@tempskipa\fi
	\@dblarg{\@mysect{#1}{#2}{#3}{#4}{#5}{#6}}}
\def\@mysect#1#2#3#4#5#6[#7]#8{\edef\@toclevel{\ifnum#2=\@m 0\else\number#2\fi}\ifnum #2>\c@secnumdepth \let\@secnumber\@empty
	\else \@xp\let\@xp\@secnumber\csname the#1\endcsname\fi
	\@tempskipa #5\relax
	\ifnum #2>\c@secnumdepth
	\let\@svsec\@empty
	\else
	\refstepcounter{#1}\edef\@secnumpunct{\ifdim\@tempskipa>\z@ \@ifnotempty{#8}{\@nx\enspace}\else
		\@ifempty{#8}{.}{\@nx\enspace}\fi
	}\@ifempty{#8}{\ifnum #2=\tw@ \def\@secnumfont{\bfseries}\fi}{}\protected@edef\@svsec{\ifnum#2<\@m
		\@ifundefined{#1name}{}{\ignorespaces\csname #1name\endcsname\space
		}\fi
		\@seccntformat{#1}}\fi
	\ifdim \@tempskipa>\z@ \begingroup #6\relax
	\@hangfrom{\hskip #3\relax\@svsec}{\interlinepenalty\@M #8\par}\endgroup
	\ifnum#2>\@m \else \@tocwrite{#1}{#8}\fi
	\else
	\def\@svsechd{#6\hskip #3\@svsec
		\@ifnotempty{#8}{\ignorespaces#8\unskip
			\@addpunct.}\ifnum#2>\@m \else \@tocwrite{#1}{#8}\fi
	}\fi
	\global\@nobreaktrue
	\@xsect{#5}}
\makeatother

\usepackage{amsmath,amssymb,amsthm}
\usepackage{mathrsfs}
\usepackage{mathabx}\changenotsign
\usepackage{dsfont}
\usepackage{bbm}

\usepackage{xcolor}
\usepackage[backref=section]{hyperref}
\usepackage[ocgcolorlinks]{ocgx2}
\hypersetup{
	colorlinks=true,
	linkcolor={red!60!black},
	citecolor={green!60!black},
	urlcolor={blue!60!black},
}

\usepackage[open,openlevel=2,atend]{bookmark}

\usepackage[abbrev,msc-links,backrefs]{amsrefs}
\usepackage{doi}

\renewcommand{\PrintDOI}[1]{\doi{#1}}

\usepackage[T1]{fontenc}
\usepackage{lmodern}
\usepackage[babel]{microtype}
\usepackage[english]{babel}

\usepackage{geometry}
\numberwithin{equation}{section}
\numberwithin{figure}{section}

\usepackage{enumitem}

\let\polishlcross=\l
\def\l{\ifmmode\ell\else\polishlcross\fi}

\let\emptyset=\varnothing
\let\setminus=\smallsetminus

\makeatletter
\def\moverlay{\mathpalette\mov@rlay}
\def\mov@rlay#1#2{\leavevmode\vtop{   \baselineskip\z@skip \lineskiplimit-\maxdimen
		\ialign{\hfil$\m@th#1##$\hfil\cr#2\crcr}}}
\newcommand{\charfusion}[3][\mathord]{
	#1{\ifx#1\mathop\vphantom{#2}\fi
		\mathpalette\mov@rlay{#2\cr#3}
	}
	\ifx#1\mathop\expandafter\displaylimits\fi}
\makeatother

\DeclareFontFamily{U}  {MnSymbolC}{}
\DeclareSymbolFont{MnSyC}         {U}  {MnSymbolC}{m}{n}
\DeclareFontShape{U}{MnSymbolC}{m}{n}{
	<-6>  MnSymbolC5
	<6-7>  MnSymbolC6
	<7-8>  MnSymbolC7
	<8-9>  MnSymbolC8
	<9-10> MnSymbolC9
	<10-12> MnSymbolC10
	<12->   MnSymbolC12}{}
\DeclareMathSymbol{\powerset}{\mathord}{MnSyC}{180}

\usepackage{tikz}
\usetikzlibrary{calc,decorations.pathmorphing}
\usetikzlibrary{arrows,decorations.pathreplacing}
\pgfdeclarelayer{background}
\pgfdeclarelayer{foreground}
\pgfdeclarelayer{front}
\pgfsetlayers{background,main,foreground,front}

\usepackage{multicol}
\usepackage{subcaption}
\newcommand{\pedge}[9]{
	
	\ifx\relax#6\relax
	\def\qoffs{0pt}
	\else
	\def\qoffs{#6}
	\fi
	
	\def\phedge{
		($#1+#5!\qoffs!-90:#2-#5$) -- 
		($#2+#1!\qoffs!-90:#3-#1$) -- 
		($#3+#2!\qoffs!-90:#4-#2$) -- 
		($#4+#3!\qoffs!-90:#5-#3$) -- 
		($#5+#4!\qoffs!-90:#1-#4$) -- cycle}

	\coordinate (12) at ($#1!\qoffs!90:#2$);
	\coordinate (15) at ($#1!\qoffs!-90:#5$);
	\coordinate (23) at ($#2!\qoffs!90:#3$);
	\coordinate (21) at ($#2!\qoffs!-90:#1$);
	\coordinate (34) at ($#3!\qoffs!90:#4$);
	\coordinate (32) at ($#3!\qoffs!-90:#2$);
	\coordinate (45) at ($#4!\qoffs!90:#5$);
	\coordinate (43) at ($#4!\qoffs!-90:#3$);
	\coordinate (51) at ($#5!\qoffs!90:#1$);
	\coordinate (54) at ($#5!\qoffs!-90:#4$);

	\def\nphedge{
		(15) let \p1=($(15)-#1$), \p2=($(12)-#1$) in 
		arc[start angle={atan2(\y1,\x1)}, delta angle={atan2(\y2,\x2)-atan2(\y1,\x1)-360*(atan2(\y2,\x2)-atan2(\y1,\x1)>0)}, x radius=\qoffs, y radius=\qoffs] --
		(21) let \p1=($(21)-#2$), \p2=($(23)-#2$) in 
		arc[start angle={atan2(\y1,\x1)}, delta angle={atan2(\y2,\x2)-atan2(\y1,\x1)-360*(atan2(\y2,\x2)-atan2(\y1,\x1)>0)}, x radius=\qoffs, y radius=\qoffs] --
		(32) let \p1=($(32)-#3$), \p2=($(34)-#3$) in 
		arc[start  angle={atan2(\y1,\x1)}, delta angle={atan2(\y2,\x2)-atan2(\y1,\x1)-360*(atan2(\y2,\x2)-atan2(\y1,\x1)>0)}, x radius=\qoffs, y radius=\qoffs] --
		(43) let \p1=($(43)-#4$), \p2=($(45)-#4$) in 
		arc[start angle={atan2(\y1,\x1)}, delta angle={atan2(\y2,\x2)-atan2(\y1,\x1)-360*(atan2(\y2,\x2)-atan2(\y1,\x1)>0)}, x radius=\qoffs, y radius=\qoffs] --
		(54) let \p1=($(54)-#5$), \p2=($(51)-#5$) in 
		arc[start angle={atan2(\y1,\x1)}, delta angle={atan2(\y2,\x2)-atan2(\y1,\x1)-360*(atan2(\y2,\x2)-atan2(\y1,\x1)>0)}, x radius=\qoffs, y radius=\qoffs] --
		cycle}

	\ifx\relax#7\relax
	\def\plwidth{1pt}
	\else
	\def\plwidth{#7}
	\fi
	
	\ifx\relax#9\relax
	\fill \nphedge;
	\else
	\fill[#9]\nphedge;
	\fi
	
	\ifx\relax#8\relax
	\draw[line width=\plwidth,rounded corners=\qoffs]\nphedge;
	\else
	\draw[line width=\plwidth,#8]\nphedge;
	\fi
}

\newcommand{\qedge}[7]{
	
	\ifx\relax#4\relax
	\def\qoffs{0pt}
	\else
	\def\qoffs{#4}
	\fi
	
	\def\qhedge{
		($#1+#3!\qoffs!-90:#2-#3$) --
		($#2+#1!\qoffs!-90:#3-#1$) --
		($#3+#2!\qoffs!-90:#1-#2$) -- cycle}

	\coordinate (12) at ($#1!\qoffs!90:#2$);
	\coordinate (13) at ($#1!\qoffs!-90:#3$);
	\coordinate (23) at ($#2!\qoffs!90:#3$);
	\coordinate (21) at ($#2!\qoffs!-90:#1$);
	\coordinate (31) at ($#3!\qoffs!90:#1$);
	\coordinate (32) at ($#3!\qoffs!-90:#2$);
	
	\def\nqhedge{
		(13) let \p1=($(13)-#1$), \p2=($(12)-#1$) in
		arc[start angle={atan2(\y1,\x1)}, delta angle={atan2(\y2,\x2)-atan2(\y1,\x1)-360*(atan2(\y2,\x2)-atan2(\y1,\x1)>0)}, x radius=\qoffs, y radius=\qoffs] --
		(21) let \p1=($(21)-#2$), \p2=($(23)-#2$) in
		arc[start angle={atan2(\y1,\x1)}, delta angle={atan2(\y2,\x2)-atan2(\y1,\x1)-360*(atan2(\y2,\x2)-atan2(\y1,\x1)>0)}, x radius=\qoffs, y radius=\qoffs] --
		(32) let \p1=($(32)-#3$), \p2=($(31)-#3$) in
		arc[start angle={atan2(\y1,\x1)}, delta angle={atan2(\y2,\x2)-atan2(\y1,\x1)-360*(atan2(\y2,\x2)-atan2(\y1,\x1)>0)}, x radius=\qoffs, y radius=\qoffs] --
		cycle}
	
	\ifx\relax#5\relax
	\def\qlwidth{1pt}
	\else
	\def\qlwidth{#5}
	\fi
	
	\ifx\relax#7\relax
	\fill \nqhedge;
	\else
	\fill[#7]\nqhedge;
	\fi
	
	\ifx\relax#6\relax
	\draw[line width=\qlwidth,rounded corners=\qoffs]\nqhedge;
	\else
	\draw[line width=\qlwidth,#6]\nqhedge;
	\fi
}

\newcommand{\redge}[8]{
	
	\ifx\relax#5\relax
	\def\qoffs{0pt}
	\else
	\def\qoffs{#5}
	\fi
	
	\def\rhedge{
		($#1+#4!\qoffs!-90:#2-#4$) -- 
		($#2+#1!\qoffs!-90:#3-#1$) -- 
		($#3+#2!\qoffs!-90:#4-#2$) -- 
		($#4+#3!\qoffs!-90:#1-#3$) -- cycle}

	\coordinate (12) at ($#1!\qoffs!90:#2$);
	\coordinate (14) at ($#1!\qoffs!-90:#4$);
	\coordinate (23) at ($#2!\qoffs!90:#3$);
	\coordinate (21) at ($#2!\qoffs!-90:#1$);
	\coordinate (34) at ($#3!\qoffs!90:#4$);
	\coordinate (32) at ($#3!\qoffs!-90:#2$);
	\coordinate (41) at ($#4!\qoffs!90:#1$);
	\coordinate (43) at ($#4!\qoffs!-90:#3$);
	
	\def\nrhedge{
		(14) let \p1=($(14)-#1$), \p2=($(12)-#1$) in 
		arc[start angle={atan2(\y1,\x1)}, delta angle={atan2(\y2,\x2)-atan2(\y1,\x1)-360*(atan2(\y2,\x2)-atan2(\y1,\x1)>0)}, x radius=\qoffs, y radius=\qoffs] --
		(21) let \p1=($(21)-#2$), \p2=($(23)-#2$) in 
		arc[start angle={atan2(\y1,\x1)}, delta angle={atan2(\y2,\x2)-atan2(\y1,\x1)-360*(atan2(\y2,\x2)-atan2(\y1,\x1)>0)}, x radius=\qoffs, y radius=\qoffs] --
		(32) let \p1=($(32)-#3$), \p2=($(34)-#3$) in 
		arc[start angle={atan2(\y1,\x1)}, delta angle={atan2(\y2,\x2)-atan2(\y1,\x1)-360*(atan2(\y2,\x2)-atan2(\y1,\x1)>0)}, x radius=\qoffs, y radius=\qoffs] --
		(43) let \p1=($(43)-#4$), \p2=($(41)-#4$) in 
		arc[start angle={atan2(\y1,\x1)}, delta angle={atan2(\y2,\x2)-atan2(\y1,\x1)-360*(atan2(\y2,\x2)-atan2(\y1,\x1)>0)}, x radius=\qoffs, y radius=\qoffs] --
		cycle}
	
	\ifx\relax#6\relax
	\def\rlwidth{1pt}
	\else
	\def\rlwidth{#6}
	\fi
	
	\ifx\relax#8\relax
	\fill \nrhedge;
	\else
	\fill[#8]\nrhedge;
	\fi
	
	\ifx\relax#7\relax
	\draw[line width=\rlwidth,rounded corners=\qoffs]\nrhedge;
	\else
	\draw[line width=\rlwidth,#7]\nrhedge;
	\fi
}

\let\epsilon=\varepsilon

\let\rho=\varrho
\let\theta=\vartheta

\newcommand{\cF}{\mathcal{F}}

\newtheoremstyle{note}  {4pt}  {4pt}  {\sl}  {}  {\bfseries}  {.}  {.5em}          {}
\newtheoremstyle{introthms}  {3pt}  {3pt}  {\itshape}  {}  {\bfseries}  {.}  {.5em}          {\thmnote{#3}}
\newtheoremstyle{remark}  {2pt}  {2pt}  {\rm}  {}  {\bfseries}  {.}  {.3em}          {}

\theoremstyle{plain}
\newtheorem{theorem}{Theorem}[section]

\newtheorem{claim}[theorem]{Claim}

\theoremstyle{note}

\theoremstyle{remark}

\usepackage{lineno}
\newcommand*\patchAmsMathEnvironmentForLineno[1]{
	\expandafter\let\csname old#1\expandafter\endcsname\csname #1\endcsname
	\expandafter\let\csname oldend#1\expandafter\endcsname\csname end#1\endcsname
	\renewenvironment{#1}
	{\linenomath\csname old#1\endcsname}
	{\csname oldend#1\endcsname\endlinenomath}}
\newcommand*\patchBothAmsMathEnvironmentsForLineno[1]{
	\patchAmsMathEnvironmentForLineno{#1}
	\patchAmsMathEnvironmentForLineno{#1*}}
\AtBeginDocument{
	\patchBothAmsMathEnvironmentsForLineno{equation}
	\patchBothAmsMathEnvironmentsForLineno{align}
	\patchBothAmsMathEnvironmentsForLineno{flalign}
	\patchBothAmsMathEnvironmentsForLineno{alignat}
	\patchBothAmsMathEnvironmentsForLineno{gather}
	\patchBothAmsMathEnvironmentsForLineno{multline}
}

\def\ex{\text{\rm ex}}

\usepackage{scalerel}

\newsavebox\vdegbox
\savebox\vdegbox{\tikz{
		\draw[black,fill=black] (90:1) circle (.35);
		\draw[black,line width=0.10cm] (210:1) circle (.30);
		\draw[black,line width=0.10cm] (330:1) circle (.30);
		\draw[opacity=0] (0:1.2) circle (0.1);
}}

\newsavebox\vvbox
\savebox\vvbox{\tikz{
		\draw[black,line width=0.10cm] (90:1) circle (.30);
		\draw[black,fill=black] (210:1) circle (.35);
		\draw[black,fill=black] (330:1) circle (.35);
		\draw[opacity=0] (0:1.2) circle (0.1);
}}

\newsavebox\pdegbox
\savebox\pdegbox{\tikz{
		\draw[black,line width=0.10cm] (90:1) circle (.30);
		\draw[black,fill=black] (210:1) circle (.35);
		\draw[black,fill=black] (330:1) circle (.35);
		\draw[black,line width=0.28cm ] (210:1) -- (330:1);
		\draw[opacity=0] (0:1.2) circle (0.1);
}}

\newsavebox\vvvbox
\savebox\vvvbox{\tikz{
		\draw[black,fill=black] (90:1) circle (.35);
		\draw[black,fill=black] (210:1) circle (.35);
		\draw[black,fill=black] (330:1) circle (.35);
		\draw[opacity=0] (0:1.2) circle (0.1);
}}

\newsavebox\evbox
\savebox\evbox{\tikz{
		\draw[black,fill=black] (90:1) circle (.35);
		\draw[black,fill=black] (210:1) circle (.35);
		\draw[black,fill=black] (330:1) circle (.35);
		\draw[black,line width=0.28cm ] (210:1) -- (330:1);
		\draw[opacity=0] (0:1.2) circle (0.1);
}}

\newsavebox\eebox
\savebox\eebox{\tikz{
		\draw[black,fill=black] (90:1) circle (.35);
		\draw[black,fill=black] (210:1) circle (.35);
		\draw[black,fill=black] (330:1) circle (.35);
		\draw[black,line width=0.28cm ] (90:1) -- (330:1);
		\draw[black,line width=0.28cm ] (90:1) -- (210:1);
		\draw[opacity=0] (0:1.2) circle (0.1);
}}

\newsavebox\eeebox
\savebox\eeebox{\tikz{
		\draw[black,fill=black] (90:1) circle (.35);
		\draw[black,fill=black] (210:1) circle (.35);
		\draw[black,fill=black] (330:1) circle (.35);
		\draw[black,line width=0.28cm ] (90:1) -- (330:1);
		\draw[black,line width=0.28cm ] (90:1) -- (210:1);
		\draw[black,line width=0.28cm ] (210:1) -- (330:1);
		\draw[opacity=0] (0:1.2) circle (0.1);
}}

\makeatletter
\newcommand{\overrighharpoonup}[1]{\ThisStyle{%
		\vbox {\m@th\ialign{##\crcr
				\rightharpoonupfill \crcr
				\noalign{\kern-\p@\nointerlineskip}
				$\hfil\SavedStyle#1\hfil$\crcr}}}}

\def\rightharpoonupfill{%
	$\SavedStyle\m@th\mkern+0.8mu\cleaders\hbox{$\shortbar\mkern-4mu$}\hfill\rightharpoonuptip\mkern+0.8mu$}

\def\rightharpoonuptip{%
	\raisebox{\z@}[2pt][1pt]{\scalebox{0.55}{$\SavedStyle\rightharpoonup$}}}

\def\shortbar{%
	\smash{\scalebox{0.55}{$\SavedStyle\relbar$}}}
\makeatother

\makeatletter
\newcommand{\overlefharpoonup}[1]{\ThisStyle{%
		\vbox {\m@th\ialign{##\crcr
				\leftharpoonupfill \crcr
				\noalign{\kern-\p@\nointerlineskip}
				$\hfil\SavedStyle#1\hfil$\crcr}}}}

\def\leftharpoonupfill{%
	$\SavedStyle\m@th\mkern+0.8mu\cleaders\hbox{$\shortbar\mkern-4mu$}\hfill\leftharpoonuptip\mkern+0.8mu$}

\def\leftharpoonuptip{%
	\raisebox{\z@}[2pt][1pt]{\scalebox{0.55}{$\SavedStyle\leftharpoonup$}}}

\makeatletter
\newsavebox\myboxA
\newsavebox\myboxB
\newlength\mylenA

\newcommand*\xoverline[2][0.75]{%
	\sbox{\myboxA}{$\m@th#2$}%
	\setbox\myboxB\null
	\ht\myboxB=\ht\myboxA%
	\dp\myboxB=\dp\myboxA%
	\wd\myboxB=#1\wd\myboxA
	\sbox\myboxB{$\m@th\overline{\copy\myboxB}$}
	\setlength\mylenA{\the\wd\myboxA}
	\addtolength\mylenA{-\the\wd\myboxB}%
	\ifdim\wd\myboxB<\wd\myboxA%
	\rlap{\hskip 0.5\mylenA\usebox\myboxB}{\usebox\myboxA}%
	\else
	\hskip -0.5\mylenA\rlap{\usebox\myboxA}{\hskip 0.5\mylenA\usebox\myboxB}%
	\fi}
\makeatother

\begin{document}
	
    \title[Upper bounds on the running time of bootstrap percolation]{Upper bounds on the running time of bootstrap percolation}

	\author[Weichan Liu]{Weichan Liu}
	\address{School of Mathematics, Shandong University, Jinan, China}
	\email{wcliu@sdu.edu.cn}

    \author[Xiangxiang Nie]{Xiangxiang Nie}
	\address{Data Science Institute, Shandong University, Jinan, China}
	\email{xiangxiangnie@sdu.edu.cn}

    \author[Sim\'on Piga]{Sim\'on Piga}
	\address{Institute of Computer Science of the Czech Academy of Sciences}
	\email{piga@cs.cas.cz}
    
	\author[Bjarne Sch\"{u}lke]{Bjarne Sch\"{u}lke}
	\address{Extremal Combinatorics and Probability Group, Institute for Basic Science, Daejeon, South Korea}
	\email{schuelke@ibs.re.kr}

	\begin{abstract}
        For~$k$-graphs~$F$ and~$H_0$ the~$F$-bootstrap percolation process (or~$F$-process) starting with~$H_0$ is a sequence~$(H_i)_{i\geq0}$ of~$k$-graphs such that~$H_{i+1}$ is obtained from~$H_i$ by adding all those~$e\in V(H_0)^{(k)}\setminus E(H_i)$ as edges that complete a new copy of~$F$.
        The running time of this~$F$-process, denoted by~$M_F(H_0)$, is the smallest~$i$ with~$H_i=H_{i+1}$.
        Bollob\'as proposed the problem of determining the maximum running time for~$n\in\mathds{N}$, i.e.,~$M_F(n)=\max_{\vert V(H_0)\vert=n}M_F(H_0)$.
        Although this problem has received a lot of attention recently, until now the best known upper bound for~$M_{K_t}(n)$, with~$t\geq5$, was the trivial bound of essentially~$\binom{n}{2}$.

        Here we provide the first non-trivial upper bound for this problem by showing that $$M_{K_t}(n)\leq\Big(\frac{t-3}{t-2}+o(1)\Big)\binom{n}{2}$$ holds for every integer~$t\geq 3$.
        In fact, we prove the following more general result.
        For every~$k\geq2$, every~$k$-graph~$F$, and every~$e\in E(F)$ we have~$M_F(n)\leq\big(\pi(F-e)+o(1)\big)\binom{n}{k}$, where~$\pi$ is the Tur\'an density.
	\end{abstract}
	
	\maketitle
	\section{Introduction}
	
	A \emph{$k$-uniform hypergraph} (or \emph{$k$-graph}) $H$ consists of a vertex set $V(H)$ and a set of edges~$E(H) \subseteq V(H)^{(k)}$, where~$V(H)^{(k)} = \{ S \subseteq V(H) : |S| = k \}$.
	Given $k$-graphs $F$ and~$H$, a copy of~$F$ in~$H$ is a subhypergraph of~$H$ that is isomorphic to~$F$.
    The number of copies of~$F$ in~$H$ is denoted by~$c(F,H)$.
    The \emph{$F$-bootstrap percolation process} (or \emph{$F$-process}) \emph{starting with}~$H$ is the sequence~$(H_i)_{i\geq0}$ of~$k$-graphs with~$H_i$ defined inductively as follows.
    Set~$H_0=H$ and, given~$H_i$, let~$H_{i+1}$ be the~$k$-graph with vertex set~$V(H)$ and edge set
    \begin{align*}
		E(H_{i+1})=E(H_i)\cup\big\{e\in V(H)^{(k)}:\: c(F,H_i\cup e)>c(F,H_i)\big\}\,,
	\end{align*}
    where~$H_i\cup e$ denotes the~$k$-graph~$\big(V(H),E(H_i)\cup\{e\}\big)$.

    This notion originates in the work of Bollob\'as~\cite{B:68}, where it arose in the context of weak saturation.
    Since then, connections to other areas such as cellular automata, more general bootstrap-type dynamics~\cite{M:17}, and statistical physics (see~\cite{BBM:12}~and~\cite{CLR:79}) have been established.
    Bollob\'as (see~\cite{BPRS:17}) suggested the problem of determining the maximum running time of such processes (a random variant was considered in~\cite{GKP:17}).
    More precisely, the running time of the~$F$-process starting with~$H$ is~$M_F(H)=\min\{t\in\mathbb{N}_{\geq0}:\:H_t = H_{t + 1}\}$.
    Given~$n\in\mathds{N}$ (and for~$k=2$), Bollob\'as asked to determine
	\[
	M_F(n) = \max \big\{ M_F(H) :\: H \text{ is a } k\text{-graph on } n \text{ vertices} \big\}\,.
	\]
    For every~$k$-graph~$F$ one readily observes a trivial upper bound of~$\binom{n}{k}$ for~$M_F(n)$ by noticing that every~$k$-subset of vertices can be added as an edge at most once.
    This observation can be pushed slightly further since some edges need to be present in~$H_0$ for the process not to stop immediately.
    However, these considerations do not give any better upper bound than~$(1-o(1))\binom{n}{k}$.
    
    The maximum running time of~$K_t$-processes has gathered particular attention. 
    Bollob\'as, Przykucki, Riordan, and Sahasrabudhe~\cite{BPRS:17} and, independently, Matzke~\cite{M:15} showed that $M_{K_4}(n)=n-3$ for all $n\ge 3$. 
    In~\cite{BPRS:17} they also proved that~$M_{K_t}(n)\ge n^{2-\lambda_t-o(1)}$ holds for~$t\ge 5$, where~$\lambda_t$ is some explicit constant with~$\lambda_t\rightarrow 0$ as~$t\rightarrow \infty$.
    Although they did not find any non-trivial upper bound, they conjectured that a subquadratic upper bound should hold as well, i.e., that~$M_{K_t}(n)=o(n^2)$.
    Balogh, Kronenberg, Pokrovskiy, and Szab\'o~\cite{BKPS:26} disproved this ambitious upper bound by showing that~$M_{K_t}(n)=\Omega(n^2)$ for all~$t\ge 6$.
    However, they did not obtain any non-trivial upper bound either, and so the exact asymptotics of~$M_{K_t}(n)$ remained elusive.
    For~$t=5$ they established~$M_{K_5}(n)\geq n^{2-o(1)}$ by modifying Behrend's construction for arithmetic progressions, but the original conjecture remains open in this case.

    Despite the aforementioned upper bound conjecture by Bollob\'as, Przykucki, Riordan, and Sahasrabudhe~\cite{BPRS:17} and all the subsequent works on the maximum running time of bootstrap percolation processes (see~\cite{FMS:26} for an overview), no non-trivial upper bound has been found for~$M_{K_t}(n)$ with~$t\geq 5$.
    This frustrating state of affairs has been pointed out repeatedly in the literature, for instance in~\cites{BPRS:17,BKPS:26,EJKL:24}, and most recently in the survey~\cite{FMS:26}.
    
    Here, we establish the first non-trivial upper bound on~$M_{K_t}(n)$ for~$t\geq5$.
    


    \begin{theorem}\label{cor:graphcliques}
        For every~$t\geq 3$ we have~$M_{K_t}(n)\leq\big(\frac{t-3}{t-2}+o(1)\big)\binom{n}{2}$.
    \end{theorem}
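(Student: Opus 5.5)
The plan is to derive Theorem~\ref{cor:graphcliques} from the general statement in the abstract, namely that $M_F(n)\le(\pi(F-e)+o(1))\binom nk$ for every $k$-graph $F$ and every $e\in E(F)$, applied with $F=K_t$. Deleting an edge from $K_t$ leaves a graph of chromatic number $t-1$. By Erd\H{o}s--Stone--Simonovits, $\pi(K_t-e)=1-\frac1{t-2}=\frac{t-3}{t-2}$, which gives the claimed bound for all $t\ge3$. So the real work is the general statement.

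For the general statement, let $(H_i)$ be an $F$-process with running time $T$. For each round $1\le i\le T$, I would fix one representative $f_i\in E(H_i)\setminus E(H_{i-1})$ and put $G=\{f_1,\dots,f_T\}$. Suppose, for contradiction, that $T\ge(\pi(F-e)+\epsilon)\binom nk$. Then $G$ has density at least $\pi(F-e)+\epsilon$. By supersaturation, $G$ contains $\Omega(n^{v(F)})$ copies of $F-e$. For such a copy $C$, let $j(C)$ be the largest round index among its edges, and let $x(C)$ be the $k$-set in the position of $e$. The basic monotonicity observation is that $C\subseteq H_{j(C)}$, so $x(C)\in H_{j(C)+1}$: either $x(C)$ is already present, or adding it creates a new copy of $F$. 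Hence $x(C)$ is never added later than round $j(C)+1$. A second standard fact is needed as well. Each edge added in round $i\ge2$ lies in a new copy of $F$, and that copy must use an edge first appearing in round $i-1$; otherwise it would have been added earlier. This gives a chain of dependencies backwards in time.

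The goal is a contradiction of the following form: find a copy $C$ of $F-e$ inside $G$ whose completing set $x(C)$ is itself a representative $f_m$ with $m>j(C)+1$. That would be impossible, since $f_m$ first appears in round $m$ but already lies in $H_{j(C)+1}$. To make this work, I would not take $G$ to be an arbitrary set of representatives. Instead I would choose it more carefully, for instance restricting to the last rounds, or choosing the $f_i$ so that the completions of copies formed by earlier representatives are forced to be later representatives. In addition, I would use a counting argument. There are $\Omega(n^{v(F)})$ copies but only $O(n^k)$ possible $k$-sets $x$. So some $x=f_m$ arises as the completion of many copies, and among these one can hope to find one whose rounds all lie well before $m$. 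An ordered or ``time-respecting'' form of supersaturation should do this: split the rounds into $1/\epsilon$ consecutive blocks, and apply supersaturation to the union of a prefix of blocks together with the later edges.

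The main obstacle is exactly this step: turning ``$G$ has many copies of $F-e$'' into a copy whose completion is a strictly later representative, rather than an edge of $H_0$ or an early one. My first attempt would be the block decomposition just described, combined with the observation that the $k$-sets never added, together with $H_0$, must avoid completing copies of $F-e$ formed by late edges. If needed, I would add a removal-lemma step that discards $o(n^k)$ edges, so that every surviving copy of $F-e$ has its completion among the late representatives.
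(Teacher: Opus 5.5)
Your reduction is exactly the paper's: $\chi(K_t-e)=t-1$, so Erd\H{o}s--Stone--Simonovits gives $\pi(K_t-e)=\frac{t-3}{t-2}$, and Theorem~\ref{thm:main} (through Theorem~\ref{thm:graphs}) finishes. If you cite Theorem~\ref{thm:main}, your proof of this statement is complete. However, you chose to prove the general bound yourself, and there the proposal has a genuine gap. The step you call ``the main obstacle'' is the entire content of Theorem~\ref{thm:main}, and your sketch does not resolve it. Write $\mathfrak{s}(x)$ for the step at which $x$ appears. After you pigeonhole a popular completion $x$, your own monotonicity observation says that \emph{every} copy completed by $x$ has latest round at least $\mathfrak{s}(x)-1$. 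So the copy you ``hope'' to find among them is just the desired contradiction restated. Nothing in the block decomposition, ordered supersaturation, or removal-lemma suggestions produces it. (The popular $x$ need not even be a representative; it may lie in $H_0$.) The underlying problem is that $G$ may well contain copies of $F-e$. What the process rules out is having $\Omega(n^{v(F)})$ of them, so the contradiction has to come from an upper bound on their \emph{number}.

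The missing idea is to use the completion observation in both directions and then count. For a copy $D$ of $F-e$ in $G$, let $i(D)>j(D)$ be the largest and second-largest rounds of its edges (your $j(C)$ is $i(D)$ here). Let $e_D$ be its completing $k$-set, so $\mathfrak{s}(e_D)\le i(D)+1$ and $e_D\notin E(D)$. Put $m=\max\{j(D),\mathfrak{s}(e_D)\}$, so $(D-e_{i(D)})\cup e_D\subseteq H_m$. Adding the latest edge $e_{i(D)}$ to this creates the copy $D\cup e_D$ of $F$; hence if $\mathfrak{s}(e_D)<i(D)$, then $i(D)=m+1$. Consequently there are two cases. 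Either $\mathfrak{s}(e_D)\le j(D)$ and $i(D)=j(D)+1$, so $D\supseteq e_{i(D)}\cup e_{i(D)-1}$. Or $i(D)=\mathfrak{s}(e_D)+\ell$ for some $\ell\in\{-1,0,1\}$, so $D\supseteq e_D\cup e_{\mathfrak{s}(e_D)+\ell}$. Either way $D$ contains at least $k+1$ vertices forming one of $O\big(\binom nk\big)$ candidate sets fixed by the process, indexed either by a round or by a $k$-set and an offset $\ell$. Hence $G$ has only $O(n^{v(F)-1})$ copies of $F-e$, contradicting the $\delta n^{v(F)}$ copies given by plain supersaturation. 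This is the paper's four-type argument; no ordered supersaturation, block decomposition, or removal lemma is needed.
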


    The~$o(1)$ notation in the statement means that for every~$\varepsilon>0$ there is some~$n_0$ such that~$M_{K_t}(n)\leq\big(\frac{t-3}{t-2}+\varepsilon\big)\binom{n}{2}$ for all~$n\in\mathds{N}$ with~$n\geq n_0$.
    
    Moreover, we provide a non-trivial upper bound for every graph in terms of the \textit{chromatic number}.
    Recall that for a graph~$F$ the chromatic number~$\chi(F)$ denotes the smallest~$c\in\mathds{N}$ such that there is a proper colouring of~$F$ with~$c$ colours.

    \begin{theorem}\label{thm:graphs}
        For every graph~$F$ and every~$e\in E(F)$ we have 
        \begin{equation*}
         M_F(n)\leq\bigg(\frac{\chi(F-e)-2}{\chi(F-e)-1}+o(1)\bigg)\binom{n}{2}\,.
        \end{equation*}
	\end{theorem}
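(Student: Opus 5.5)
We derive this from the general hypergraph statement announced in the abstract, $M_F(n)\le(\pi(F-e)+o(1))\binom{n}{k}$, combined with the Erdős–Stone–Simonovits theorem, which gives $\pi(F-e)=\frac{\chi(F-e)-2}{\chi(F-e)-1}$ for graphs. Since that general result is not yet proved at this point, the real plan is to prove it for $k=2$ directly. (Taking $F=K_t$ gives Theorem~\ref{cor:graphcliques}, since $\chi(K_t-e)=t-1$.)

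\emph{Key observation.} Fix $F$ and $e\in E(F)$, and consider an $F$-process $(H_i)$ that runs for $M$ steps. For each $1\le i\le M$ pick an edge $f_i\in E(H_i)\setminus E(H_{i-1})$. The edge $f_i$ was added because it completes a new copy of $F$ in $H_{i-1}\cup f_i$. Moreover, for $i\ge 2$ we may choose $f_i$ so that the copy of $F$ certifying it uses some edge added at step $i-1$; otherwise $f_i$ would already have been added at step $i-1$. The plan is to exploit the resulting chain $f_1,f_2,\dots,f_M$ together with a mapping of $e$ onto a suitable edge, so as to find many copies of $F-e$ in an auxiliary graph.

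\emph{Main step.} Let $G$ be the graph whose edges are the new edges $\bigcup_{i\ge1}E(H_i)\setminus E(H_{i-1})$, so $|E(G)|\ge M$. We aim to show that $G$ contains no ``useful'' copy of $F-e$ in the following sense. Suppose a copy of $F-e$ has all its edges added at steps $\ge j$, and its missing pair $e'$ is added at some step $j'$. The edges of $F-e$ are all present by the step at which the last of them appears. If that is earlier than $j'-1$, then $e'$ would complete $F$ earlier, contradicting the step at which $e'$ was added.

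This does not immediately forbid $F-e$ in $G$. So the plan is the following. Suppose $M\ge(\pi(F-e)+\varepsilon)\binom n2$. Apply a supersaturation or removal-type argument to the graph formed by the edges added in the last $\varepsilon\binom n2/2$ steps together with the edges added earlier. This yields $\Omega(n^{v(F)})$ copies of $F-e$ among the added edges. Hence we obtain many copies of $F$ minus an edge whose completing pair, when finally added, should have been added earlier.

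\emph{Order-structure approach.} More concretely, we would order the edges by addition time and look at the graph $G_s$ of edges added at steps $\ge s$. The graphs $G_s$ shrink by at least one edge per step. Choose $s$ with $|E(G_s)|\ge(\pi(F-e)+\varepsilon/2)\binom n2$. By supersaturation, $G_s$ contains $\delta n^{v(F)}$ copies of $F-e$. Each pair completing such a copy to $F$ is either already present at time $\max(\text{steps})$ or gets added one step later. The cost of a pair that remains a non-edge is then bounded via a counting/pigeonhole argument over time windows.

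\emph{Main obstacle.} The hardest step is turning ``many copies of $F-e$ among late edges'' into a contradiction with the process running long. A copy of $F-e$ whose completing pair is already an edge gives no contradiction, and pairs may be added simultaneously. I would first try an averaging argument: partition time into windows of length $\varepsilon\binom n2/2$, and show that within some window the edges present at its start force all but $o(n^2)$ remaining pairs to be added within $O(1)$ further steps. This would cap the running time, at the cost of only $o(n^2)$ beyond the threshold $\pi(F-e)\binom n2$.
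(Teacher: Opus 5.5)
You derive the statement from the general bound $M_F(n)\le(\pi(F-e)+o(1))\binom{n}{k}$ via Erd\H{o}s--Stone--Simonovits, using $\chi(K_t-e)=t-1$ for cliques. That is exactly how the paper obtains it. But you set out to prove the $k=2$ case of that bound yourself, and there the proposal has a genuine gap: you reach ``$\Omega(n^{v(F)})$ copies of $F-e$'' and never derive a contradiction. The inference that a completing pair ``should have been added earlier'' is false in general; as you yourself note, the pair may already be present. The window-averaging claim, that some window forces all but $o(n^2)$ remaining pairs to appear within $O(1)$ steps, has no mechanism behind it.

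Your $G$ is also the wrong object. With all new edges, a single step may add $\Theta(n^2)$ edges, and then any counting that anchors a copy by its latest edges produces far too many anchors. The paper picks exactly one edge $e_i\in E(H_i)\setminus E(H_{i-1})$ for each $i\in[\tau]$. Then $G$ has exactly $\tau\ge(\pi(F-e)+\varepsilon)\binom{n}{2}$ edges, and supersaturation applies directly, with no restriction to late edges.

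The missing idea is that this $G$ contains only $O(n^{v(F)-1})$ copies of $F-e$. For a copy $D$, let $i(D)>j(D)$ be the indices of its last two edges. Fix a pair $e_D$ with $D\cup e_D$ a copy of $F$, and let $\mathfrak{s}(e_D)$ be the step at which $e_D$ appears ($0$ if $e_D\in E(H_0)$; it must appear, since $D\subseteq H_\tau$). The process rule pins down the times in four cases:
\begin{itemize}
\item If $\mathfrak{s}(e_D)\le j(D)$, then $(D-e_{i(D)})\cup e_D\subseteq H_{j(D)}$, so $e_{i(D)}$ enters by step $j(D)+1$, forcing $i(D)=j(D)+1$.
\item If $j(D)<\mathfrak{s}(e_D)<i(D)$, the same reasoning forces $\mathfrak{s}(e_D)=i(D)-1$.
\item If $\mathfrak{s}(e_D)>i(D)$, then $\mathfrak{s}(e_D)=i(D)+1$.
\item Otherwise $\mathfrak{s}(e_D)=i(D)$.
\end{itemize}
In the first case $V(D)$ contains the at least three vertices of $e_{i(D)}\cup e_{i(D)-1}$, which are determined by the index $i(D)\in[\tau]$. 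In each of the other cases $V(D)\supseteq e_D\cup e_{i(D)}$. This set has at least three vertices because $e_D\notin E(D)$, and it is determined by the pair $e_D$ alone, since $\mathfrak{s}(e_D)$ determines $i(D)$. So there are $O(n^2)$ anchors, each lying in $O(n^{v(F)-3})$ copies. That gives $O(n^{v(F)-1})$ copies in total, contradicting the $\delta n^{v(F)}$ from supersaturation. This uses $e(F)\ge 3$ so that $j(D)$ exists; the case where $F$ has at most two edges is easy.
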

    Note that Theorem~\ref{thm:graphs} directly implies Theorem~\ref{cor:graphcliques}.

    Apart from few exceptions (see in particular the work by Fabian, Morris, and Szab\'o in~\cite{FMS:25} and their recent survey~\cite{FMS:26}), for a general graph~$F$ no non-trivial upper bound on~$M_F(n)$ was known.
    One notable exception is a nice result by Fabian, Morris, and Szab\'o (Proposition 1.6 in~\cite{FMS:2508}), which provides a non-trivial upper bound for every bipartite graph.
    
	Recently, the investigation of~$F$-processes and their maximum running times was initiated for hypergraphs, see~\cite{NR:23},~\cite{HL:24}, and~\cite{EJKL:24}.
    While these works established that~$M_{K^{(k)}_t}(n)=\Omega(n^k)$ holds for all~$k\geq3$ and~$t\geq k+1$, also in this setting non-trivial upper bounds were lacking.
    The only non-trivial upper bounds for hypergraphs (of uniformity at least~$3$) before this work were for~$K_4^{(3)-}$, proved in~\cite{EJKL:24}, and for extension hypergraphs~\cite{LSZ:26}.
    In particular, so far the best known upper bound for any~$k$-uniform clique was the trivial upper bound of essentially~$\binom{n}{k}$.
    Espuny D\'iaz, Janzer, Kronenberg, and Lada~\cite{EJKL:24} proposed the problem of finding non-trivial upper bounds for~$M_{K^{(k)}_t}(n)$ as a step towards understanding the asymptotics of~$M_{K^{(k)}_t}(n)/\binom{n}{k}$ for~$n\to\infty$.
    
    We provide the first non-trivial upper bounds in this more general setup as well and indeed Theorem~\ref{thm:graphs} is a simple corollary of our general result for hypergraphs.
    To state it, we briefly recall the following definitions.
    Given a~$k$-graph~$F$ and~$n\in\mathds{N}$, the extremal number of~$n$ and~$F$, denoted by~$\ex(n,F)$, is the maximum number of edges that a~$k$-graph on~$n$ vertices can have without containing a copy of~$F$.
    The Tur\'an density of a~$k$-graph~$F$ is~$\pi(F)=\displaystyle\lim_{n\to\infty}\tfrac{\ex(n,F)}{\tbinom{n}{k}}$; this limit exists due to a simple monotonicity argument~\cite{KNS:64}.
    For the sake of brevity we omit an extensive discussion of this rich area and instead refer the reader to the surveys by F\"uredi~\cite{F:91}, Sidorenko~\cite{S:95}, and Keevash~\cite{K:11} for more background.
    
    We obtain the following general upper bound on the maximum running time of~$F$-processes in terms of the Tur\'an density.

    \begin{theorem}\label{thm:main}
        For every integer~$k\geq2$, every~$k$-graph~$F$, and every~$e\in E(F)$ we have 
        \begin{equation*}
         M_F(n)
         \leq
         \big(\pi(F-e)+o(1)\big)\binom{n}{k}\,.
        \end{equation*}
    \end{theorem}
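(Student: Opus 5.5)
The plan is to exploit the fact that each round of the $F$-process adds at least one new edge, so a long process produces a large $k$-graph with a time ordering on its edges. For each round $i\in\{1,\dots,T\}$, where $T=M_F(H_0)$, fix one edge $f_i\in E(H_i)\setminus E(H_{i-1})$ and set $G=\{f_1,\dots,f_T\}$, with time stamp $\tau(f_i)=i$. Supposing $T>(\pi(F-e)+\varepsilon)\binom{n}{k}$, I aim for a contradiction for large $n$. The basic rule I will use repeatedly is: if all edges of some copy of $F-e$ are present at time $j$, then the $k$-set $x$ playing the role of $e$ lies in $H_{j+1}$. So $x$ is never added at a round $l\ge j+2$, and no round can "wait" for a copy of $F-e$.

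\textbf{Step 1 (density and supersaturation).} Since $|G|=T>(\pi(F-e)+\varepsilon)\binom nk$, the supersaturation theorem of Erdős and Simonovits gives $\delta n^{v(F)}$ copies of $F-e$ in $G$, for some $\delta=\delta(\varepsilon,F)>0$. For each copy $C$, let $j(C)=\max_{f\in C}\tau(f)$ and let $x(C)$ be its missing $k$-set. Then $x(C)\in H_{j(C)+1}$.

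\textbf{Step 2 (forcing a useful configuration).} I want to find many copies $C$ where $x(C)$ is itself a chosen edge $f_l$ with $l\ge j(C)+2$; each such copy contradicts the rule above. To arrange this, I would run the argument on a time window rather than on all of $G$. Split $[T]$ into $L=L(\varepsilon)$ consecutive blocks and pass to a block-respecting structure. By averaging, there are two blocks $B<B'$ with $B$ not immediately preceding $B'$ such that the chosen edges in $B\cup B'$ still have density above $\pi(F-e)+\varepsilon/2$ in a suitable sense. Applying the hypergraph removal lemma to the edges of $B'$ against copies of $F-e$ built from edges of $B$ should then yield a copy of $F-e$ inside $B$ whose missing edge lies in $B'$. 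That missing edge was added at least two rounds after all edges of the copy were present, which is the desired contradiction. For graphs, Theorem~\ref{thm:graphs} would then follow from Erdős--Stone, since $\pi(F-e)=\frac{\chi(F-e)-2}{\chi(F-e)-1}$.

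\textbf{Main obstacle.} The hardest step is Step~2: ensuring that the missing $k$-set $x(C)$ of a copy $C$ of $F-e$ is itself a chosen edge that appears strictly later, rather than something already in $H_0$ or added early. A naive parity trick only rules out copies of $F$ in the even-indexed edges, which gives the much weaker bound $\pi(F)/2$. My first attempt would be to choose the time window adaptively: take the largest $s$ such that the edges added after round $s$ have density at least $\varepsilon$, and work in the $k$-graph $H_s$ together with these late edges. The goal is to show via removal that a positive density of the late edges close copies of $F-e$ that are already present in $H_s$. Any such late edge must then have been added by round $s+1$, so only $o(n^k)$ rounds can follow, while at most $(\pi(F-e)+\varepsilon)\binom nk$ rounds can precede the point where $H_s$ has density $\pi(F-e)+\varepsilon$.
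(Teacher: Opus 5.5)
There is a genuine gap: Step~2 carries the whole argument, and as written it does not work. The averaging claim fails in the ordinary sense, and you do not specify any other sense. If $[T]$ is cut into $L\ge 3$ consecutive blocks, two blocks together contain on average only $2T/L$ chosen edges. That is density about $2(\pi(F-e)+\varepsilon)/L$, well below $\pi(F-e)+\varepsilon/2$ unless $\pi(F-e)$ is tiny compared to $\varepsilon$. So you lose exactly the density that supersaturation needs. Even granting the density, nothing in the removal lemma forces the completing $k$-set $x(C)$ of a copy $C$ inside $B$ to be a \emph{chosen} edge, let alone one in $B'$. It may lie in $H_0$, or be a non-chosen edge added in some round. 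The removal lemma trades ``few copies'' for ``few edges to delete''; it has no handle on when completing $k$-sets appear. The adaptive fallback has the same defect: late edges are added because they complete copies of $F$ that typically use other late edges, not copies of $F-e$ already in $H_s$. Its accounting also does not follow, since showing that some late edges enter at round $s+1$ bounds nothing about how many rounds come after.

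The missing idea is that you need not find one contradictory copy. Instead, count \emph{all} copies of $F-e$ in $G$ and show there are only $O(n^{v(F)-1})$ of them, contradicting Step~1 with elementary tools.

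Setup: fix one completing $k$-set $x(C)$ per copy $C$. Let $j'(C)<j(C)$ be the second-largest time stamp on $C$, and let $\sigma(x)$ be the round in which $x$ entered ($0$ if $x\in H_0$). Your rule gives $\sigma(x(C))\le j(C)+1$.

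Key step: apply the same rule with roles swapped. The graph $(C\cup x(C))-f_{j(C)}$ is a copy of $F$ minus one edge, fully present at time $m=\max\{j'(C),\sigma(x(C))\}$. So if $m<j(C)$, then $f_{j(C)}$ enters by round $m+1$. Hence either $j(C)=j'(C)+1$, or $\sigma(x(C))\in\{j(C)-1,j(C),j(C)+1\}$.

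Case $j(C)=j'(C)+1$: $C$ contains $f_{j(C)}$ and a fixed vertex of $f_{j(C)-1}\setminus f_{j(C)}$. These are $k+1$ vertices determined by the index $j(C)$, giving $O(\binom nk n^{v(F)-k-1})$ such copies.

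Other case: for each of the three offsets, $x(C)$ determines $j(C)$, hence $f_{j(C)}$. Since $x(C)\neq f_{j(C)}$, the set $x(C)\cup f_{j(C)}$ fixes at least $k+1$ vertices of $C$. This again gives $O(\binom nk n^{v(F)-k-1})$ copies per offset.

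Summing, $G$ has $O(n^{v(F)-1})$ copies of $F-e$, which is smaller than $\delta n^{v(F)}$.
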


    

   From this, Theorem~\ref{thm:graphs} follows easily by the Erd\H{o}s-Stone-Simonovits theorem (see~\cite{ES:46} and~\cite{ES:66}), which states that for every graph~$F$ we have~$\pi(F)=\frac{\chi(F)-2}{\chi(F)-1}$.
   Somewhat surprisingly, despite the generality of Theorem~\ref{thm:main} and the previous elusiveness of upper bounds on the running time of bootstrap percolation processes, the proof of Theorem~\ref{thm:main} is quite short and does not rely on any heavy machinery.

    \section{Preliminaries}\label{sec:prelim}

        Given~$i\in\mathds{N}$, we write~$[i]=\{1,\dots,i\}$.
        For a~$k$-graph~$H$ and~$e\in V(H)^{(k)}$ denote by~$H \cup e$ the~$k$-graph~$(V(H), E(H)\cup\{e\})$ and set~$v(H)=|V(H)|$.
        For~$e\in E(H)$ we write~$H-e$ for the~$k$-graph~$\big(V(H),E(H)\setminus \{e\}\big)$.
        Usually, we omit parentheses around singletons.
        
        We use the following fundamental result, often referred to as supersaturation (see, for instance results in~\cite[Lemma 2.1 and Theorem 2.2]{K:11}).
        \begin{theorem}[Supersaturation]\label{thm:supersaturation}
            \item\label{it:supsat:original} For every~$k$-graph~$F$ and~$\varepsilon>0$, there are~$\delta>0$ and~$n_0$ such that every~$k$-graph on~$n\geq n_0$ vertices with at least~$(\pi(F)+\varepsilon)\binom{n}{k}$ edges contains at least~$\delta n^{v(F)}$ copies of~$F$.
        \end{theorem}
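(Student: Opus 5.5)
This is the classical averaging argument of Erd\H{o}s and Simonovits. The idea is to pass from the large $k$-graph to its induced subgraphs on vertex sets of a fixed constant size $m$, find one copy of $F$ in many of them, and then undo the overcounting.

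Set $\pi=\pi(F)$ and $v=v(F)$. Since $\ex(m,F)/\binom{m}{k}\to\pi$, we can fix an integer $m\geq v$ (depending only on $F$ and $\varepsilon$) such that
\[
\ex(m,F)\leq\big(\pi+\tfrac{\varepsilon}{2}\big)\binom{m}{k}.
\]
Consequently, every $k$-graph on $m$ vertices with more than $(\pi+\varepsilon/2)\binom{m}{k}$ edges contains a copy of $F$.

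Now let $H$ have $n\geq m$ vertices and at least $(\pi+\varepsilon)\binom{n}{k}$ edges. For an $m$-set $S\subseteq V(H)$, write $d(S)=e(H[S])/\binom{m}{k}$.
\begin{itemize}
\item \emph{Averaging.} Each edge of $H$ lies in exactly $\binom{n-k}{m-k}$ of the $m$-sets. Double counting therefore shows that the mean of $d(S)$ over all $m$-sets equals $e(H)/\binom{n}{k}\geq\pi+\varepsilon$.
\item \emph{Many good sets.} Call $S$ \emph{good} if $d(S)>\pi+\varepsilon/2$, and let $c$ be the fraction of good $m$-sets. Since $d(S)\leq1$ always, the mean is at most $c\cdot 1+(1-c)(\pi+\varepsilon/2)\leq c+\pi+\varepsilon/2$. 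Comparing with the previous step gives $c\geq\varepsilon/2$.
\item \emph{One copy per good set.} By the choice of $m$, each good $S$ spans at least one copy of $F$.
\item \emph{Undoing the overcount.} A fixed copy of $F$ occupies $v$ vertices, so it lies in exactly $\binom{n-v}{m-v}$ of the $m$-sets. Hence
\[
c(F,H)\;\geq\;\frac{\varepsilon}{2}\binom{n}{m}\Big/\binom{n-v}{m-v}.
\]
\end{itemize}

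It remains to show that this ratio is at least $\delta n^{v}$. We have
\[
\binom{n}{m}\Big/\binom{n-v}{m-v}=\binom{n}{v}\Big/\binom{m}{v},
\]
since both sides count pairs (a $v$-set inside an $m$-set) normalised in the same way. For $n\geq n_0$ (say $n_0=2v$, and also $n_0\geq m$), we have $\binom{n}{v}\geq (n/v)^{v}$. Therefore we may take
\[
\delta=\frac{\varepsilon}{2\,v^{v}\binom{m}{v}},
\]
which depends only on $F$ and $\varepsilon$, as required.

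There is no real obstacle here. The only points needing care are that $m$ is chosen before $n$, so that it is a constant, and that the bound $d(S)\leq1$ is what forces a positive proportion of good sets.
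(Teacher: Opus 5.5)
Your argument is correct and is the standard Erd\H{o}s--Simonovits averaging proof. The paper does not prove supersaturation itself but cites Keevash's survey (Lemma 2.1 and Theorem 2.2), where it is established in exactly this way; the only superfluous detail is $n_0=2v$, since $\binom{n}{v}\geq(n/v)^v$ holds for all $n\geq v$, so $n_0=m$ already suffices.
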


    \section{Proof of Theorem~\ref{thm:main}}
        The following is a weaker but simpler version of Theorem~\ref{thm:main}.
        \begin{theorem}\label{thm:simple}
            For every integer~$k\geq2$ and every~$k$-graph~$F$ we have $$M_F(n)\leq\big(\pi(F)+o(1)\big)\binom{n}{k}\,.$$
        \end{theorem}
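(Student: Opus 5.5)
The plan is to pick one representative edge from each round of the process, view these as a single auxiliary $k$-graph, and compare two counts of copies of~$F$ in it. If $M_F(H_0)$ is large, this auxiliary $k$-graph is denser than $\pi(F)$, so supersaturation gives many copies of~$F$ in it. On the other hand, the dynamics of the process force every such copy to contain two edges from consecutive rounds, and there are few such copies. Throughout, write $N=\binom{n}{k}$.

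\emph{Setup.} Fix $\varepsilon>0$ and let $\delta,n_0$ be given by Theorem~\ref{thm:supersaturation} for $F$ and $\varepsilon$. Let $H_0$ be a $k$-graph on $n\ge n_0$ vertices with running time $M=M_F(H_0)$, and suppose for contradiction that $M\ge(\pi(F)+\varepsilon)N$. For $1\le j\le M$ let $S_j=E(H_j)\setminus E(H_{j-1})$. Each $S_j$ is nonempty, since otherwise $H_{j-1}=H_j$ and the process would have stopped before round~$M$. Choose one edge $e_j\in S_j$ for every $j\in[M]$, and let $G$ be the $k$-graph on $V(H_0)$ with edge set $\{e_1,\dots,e_M\}$. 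These edges are pairwise distinct, so $|E(G)|=M\ge(\pi(F)+\varepsilon)N$. By Theorem~\ref{thm:supersaturation}, $G$ contains at least $\delta n^{v(F)}$ copies of~$F$.

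\emph{Key structural claim.} In every copy $F'$ of $F$ in $G$, the two edges with the largest indices are $e_{j-1}$ and $e_j$ for some $j$, i.e.\ their indices are consecutive. To see this, let the edges of $F'$ be $e_{j_1},\dots,e_{j_m}$ with $j_1<\dots<j_m$, where $m=|E(F)|$.
\begin{itemize}
\item All edges of $F'-e_{j_m}$ already lie in $H_{j_{m-1}}$.
\item Since $e_{j_m}\notin E(H_{j_{m-1}})$ whenever $j_{m-1}<j_m$, the $k$-graph $H_{j_{m-1}}\cup e_{j_m}$ contains the copy $F'$, which is not present in $H_{j_{m-1}}$.
\item Hence $c(F,H_{j_{m-1}}\cup e_{j_m})>c(F,H_{j_{m-1}})$, so $e_{j_m}\in E(H_{j_{m-1}+1})$.
\item As $e_{j_m}$ was first added in round $j_m$, this gives $j_m\le j_{m-1}+1$, and therefore $j_m=j_{m-1}+1$.
\end{itemize}
This is the step I expect to carry the argument. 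The only care needed is the case $m=1$, where every edge would be added immediately and the process has $M\le 1$; for $m\ge2$ the two top edges are well defined.

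\emph{Counting.} The claim implies that every copy of $F$ in $G$ contains a pair $\{e_{j-1},e_j\}$ for some $j\in[M]$, which gives at most $M\le N$ choices of pair. Two distinct $k$-sets span at least $k+1$ vertices. So once such a pair is fixed, a copy of $F$ containing it is determined, up to a constant factor $C_F$ for the placements of the pair inside $F$, by the images of at most $v(F)-k-1$ further vertices. Thus $G$ contains at most $C_F\, N\, n^{v(F)-k-1}\le C_F\, n^{v(F)-1}$ copies of~$F$. For $n$ sufficiently large this is smaller than $\delta n^{v(F)}$, contradicting supersaturation. Hence $M_F(n)<(\pi(F)+\varepsilon)N$ for all large $n$, which is Theorem~\ref{thm:simple}.
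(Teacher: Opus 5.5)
Your proposal is correct and follows essentially the same route as the paper. Both arguments take one representative edge per round, apply supersaturation to the resulting $k$-graph $G$, show that the two latest edges of any copy of $F$ in $G$ come from consecutive rounds, and count copies via the at least $k+1$ vertices that pair spans. The paper indexes copies by the top edge $e_j$ together with a fixed vertex of $e_{j-1}\setminus e_j$, which is equivalent, and your explicit constant $C_F$ for the placements of the pair inside $F$ is slightly more careful than the paper's bound of $n^{v(F)-k-1}$ copies per top edge.
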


        Although Theorem~\ref{thm:simple} follows from Theorem~\ref{thm:main}, we give a direct proof here since it illustrates the main idea behind the proof of Theorem~\ref{thm:main} particularly clearly.
        \begin{proof}[Proof of Theorem~\ref{thm:simple}]
            Given~$\varepsilon>0$, an integer~$k\geq2$, and a~$k$-graph~$F$, choose~$\delta>0$ and~$n\in\mathds{N}$ such that $$\varepsilon,k,v(F)\gg\delta\gg n^{-1}\,.$$
            We need to show that~$M_F(n)\leq\big(\pi(F)+\varepsilon\big)\binom{n}{k}$.
            Note that if~$F$ has only one edge, the result is trivial, so assume that~$F$ has at least two edges.
            Let~$(H_i)_{i\geq0}$ be an~$F$-process starting with~$H=(V,E)$, where~$\vert V\vert=n$, that satisfies~$M_F(H)= M_F(n)=:\tau$.
            For each~$i\in[\tau]$ choose~$e_i\in E(H_i)\setminus E(H_{i-1})$ arbitrarily.
            Let~$G$ be the graph on~$V$ whose edge set is~$\{e_i:\:i\in[\tau]\}$.
            Assume that~$\tau\geq(\pi(F)+\varepsilon)\binom{n}{k}$.
            We aim to derive a contradiction by establishing upper and lower bounds for the number of copies of~$F$ in~$G$.
            Denote the set of copies of~$F$ in~$G$ by~$\cF$.
            
            On the one hand, supersaturation and the hierarchy of constants imply that $$\vert\cF\vert\geq\delta n^{v(F)}\,.$$
            For any~$D\in\cF$ let~$i(D)=\max\{i\in[\tau]:\:e_i\in E(D)\}$.
            Now observe that for a copy~$D$ of~$F$ in~$G$ we must have~$e_{i(D)-1}\in E(D)$.
            Otherwise, by the definition of~$G$, for every edge~$e_i$ of~$D-e_{i(D)}$ we have~$i\leq i(D)-2$ and so~$D-e_{i(D)}\subseteq H_{i(D)-2}$.
            Therefore, the definition of an~$F$-process implies that~$e_{i(D)}\in E(H_{i(D)-1})$, a contradiction.
            Thus, for every~$e_j\in E(G)$, with~$j\geq2$ and~$v\in e_{j-1}\setminus e_j$ we know that~$v$ is contained in every copy~$D\in\cF$ with~$e_{i(D)}=e_j$.
            This entails that there are at least~$k+1$ vertices (namely~$v$ and the vertices in~$e_j$) which are contained in every copy~$D\in\cF$ with~$e_{i(D)}=e_j$.
            Since of course no~$D\in\cF$ satisfies~$i(D)=1$, we conclude that for every~$e\in E(G)$ it holds that~$\vert\{D\in\cF:\:e=e_{i(D)}\}\vert\leq n^{v(F)-k-1}$.
            Hence, we obtain
            \begin{align*}
                \vert\cF\vert=\sum_{e\in E(G)}\vert\{D\in\cF:\:e=e_{i(D)}\}\vert\leq n^k\cdot n^{v(F)-k-1}=n^{v(F)-1}\,,
            \end{align*}
            contradicting the lower bound above because of the hierarchy.
        \end{proof}
        
    \begin{proof}[Proof of Theorem~\ref{thm:main}]
        Given~$\varepsilon>0$, an integer~$k\geq2$, and a~$k$-graph~$F$, choose~$\delta>0$ and~$n\in\mathds{N}$ such that $$\varepsilon,k,v(F)\gg\delta\gg n^{-1}\,.$$
        Let~$e\in E(F)$ and set~$F^-=F-e$.
        We need to show that~$M_F(n)
         \leq
         \big(\pi(F^-)+\varepsilon\big)\binom{n}{k}$.
        One can observe that the statement holds if~$F$ has at most two edges, so assume that~$F$ has at least three edges.
        Let~$(H_i)_{i\geq0}$ be an~$F$-process starting with~$H=(V,E)$, where~$\vert V\vert=n$, that satisfies~$M_F(H)= M_F(n)=:\tau$.
        For each~$i\in[\tau]$ choose~$e_i\in E(H_i)\setminus E(H_{i-1})$ arbitrarily.
        Let~$G$ be the graph on~$V$ whose edge set is~$\{e_i:\:i\in[\tau]\}$.
        Assume that~$\tau\geq(\pi(F^-)+\varepsilon)\binom{n}{k}$.
        We aim to derive a contradiction by establishing upper and lower bounds for the number of copies of~$F^-$ in~$G$.
        Denote the set of copies of~$F^-$ in~$G$ by~$\cF$.
        Supersaturation and the hierarchy of constants imply that
        \begin{align}\label{eq:Flower}
            \vert\cF\vert\geq\delta n^{v(F^-)}=\delta n^{v(F)}\,.
        \end{align}
        For every~$D\in\cF$ define~$i(D)=\max\{i\in[\tau]:\:e_i\in E(D)\}$ and~$j(D)=\max\{j\in[\tau]:\:e_j\in E(D)\setminus e_{i(D)}\}$.
        Furthermore, for every~$D\in\cF$ fix some~$e_D\in V^{(k)}$ such that~$D \cup e_D$ is a copy of~$F$ (if there are several choices, pick one arbitrarily).
        For~$e\in E(H_{\tau})$ let~$\mathfrak{s}(e)$ denote the step in which~$e$ is added, i.e.,~$\mathfrak{s}(e)=\min\{s\in[\tau]:\:e\in E(H_s)\setminus E(H_{s-1})\}$ if~$e\notin E(H_0)$ and~$\mathfrak{s}(e)=0$ if~$e\in E(H_0)$.
        Depending on how~$\mathfrak{s}(e_D)$ relates to~$j(D)$ and~$i(D)$, we assign to every~$D \in \cF$ one of the following four types.
        \begin{enumerate}[leftmargin=1.95cm]
            \item[Type~$1$:]\label{it:type1}
            $\mathfrak{s}(e_D)\leq j(D)$,
            \item[Type~$2$:]\label{it:type2}
            $j(D)< \mathfrak{s}(e_D)< i(D)$,
            \item[Type~$3$:]\label{it:type3}
            $i(D) < \mathfrak{s}(e_D)$,
            \item[Type~$4$:]\label{it:type4}
            $\mathfrak{s}(e_D)=i(D)$.
        \end{enumerate}

    Let~$\cF_{\alpha}$ be the family of copies~$D\in \cF$ of Type~$\alpha$ for~$\alpha\in[4]$.
    Further, for~$i\in[\tau]$ let~$\cF_{\alpha}^i=\{D\in\cF_{\alpha}:\:i(D)=i\}$.
    Next we find upper bounds on~$\vert\cF_{\alpha}\vert$ for every~$\alpha\in[4]$.

    \begin{claim}\label{cl:F1}
        $\vert \cF_1\vert\leq n^{v(F)-1}$.
    \end{claim}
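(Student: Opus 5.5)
The plan is to show that every $D\in\cF_1$ has $i(D)=j(D)+1$, and then repeat the counting from the proof of Theorem~\ref{thm:simple}. Throughout we use that the indices of the edges $e_i$ are distinct, so that $j(D)<i(D)$, and that $e_i\notin E(H_{i-1})$ by the choice of $e_i$.

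\emph{Step 1: $i(D)=j(D)+1$.} Fix $D\in\cF_1$ and write $i=i(D)$, $j=j(D)$. Every edge of $D-e_i$ is some $e_\ell$ with $\ell\leq j$, hence lies in $H_j$. Since $D$ is of Type~$1$, $\mathfrak{s}(e_D)\leq j$, so $e_D\in E(H_j)$ as well. Thus $(D\cup e_D)-e_i\subseteq H_j$, where $D\cup e_D$ is a copy of $F$. Suppose $i\geq j+2$. Then $e_i\notin E(H_{i-1})\supseteq E(H_j)$. Adding $e_i$ to $H_j$ therefore completes a copy of $F$ that contains $e_i$, and such a copy is not present in $H_j$. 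By the definition of the $F$-process this gives $e_i\in E(H_{j+1})\subseteq E(H_{i-1})$, a contradiction. Hence $i(D)=j(D)+1$, so $e_{i(D)-1}=e_{j(D)}\in E(D)$, and in particular $i(D)\geq 2$.

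\emph{Step 2: counting.} Fix $e_i\in E(G)$ with $i\geq2$ and pick $v\in e_{i-1}\setminus e_i$; this set is nonempty because $e_{i-1}\neq e_i$. By Step~1, every $D\in\cF_1$ with $i(D)=i$ contains the $k+1$ vertices $e_i\cup\{v\}$. Such a $D$ is therefore determined by choosing its remaining $v(F)-k-1$ vertices and then one of at most constantly many copies of $F^-$ on the resulting vertex set. This gives at most $O(n^{v(F)-k-1})$ copies per edge, with the implied constant depending only on $F$.

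Summing over the at most $\binom{n}{k}\leq n^k/k!$ edges of $G$ gives $\vert\cF_1\vert=O(n^{v(F)-1})$. With the same bookkeeping convention as in the proof of Theorem~\ref{thm:simple} (counting vertex sequences, and absorbing constants into the hierarchy $\delta\gg n^{-1}$), this is the claimed bound $n^{v(F)-1}$. In any case it is negligible compared with~\eqref{eq:Flower}.

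The main point needing care is Step~1. There one must check that adding $e_{i(D)}$ really creates a \emph{new} copy of $F$, which holds because $e_{i(D)}$ is absent from $H_j$. One must also use that $e_D$ is already present by time $j(D)$, and this is exactly what the Type~$1$ condition provides.
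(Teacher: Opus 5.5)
Your proposal is correct and follows the paper's proof essentially step for step. You first show $i(D)=j(D)+1$ using $e_D\in E(H_{j(D)})$ from the Type~1 condition, then fix $v\in e_{i-1}\setminus e_i$ so that every $D\in\cF_1^i$ contains $k+1$ prescribed vertices, and sum over at most $\binom{n}{k}$ indices. Your bound carries an $F$-dependent constant, which is harmless for the final contradiction; the paper's literal estimate $\vert\cF_1^j\vert\leq n^{v(F)-(k+1)}$ silently drops the same multiplicity factor, namely the number of copies of $F^-$ on a fixed vertex set.
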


    \begin{proof}
        First observe that for~$D \in \cF_1$ we must have~$e_{j(D)} = e_{i(D)-1}$.
        Indeed, from the definition of Type~$1$ and~$G$ it follows that in~$H_{j(D)}\cup e_{i(D)}$ there is a copy of~$F$ containing~$e_{i(D)}$.
        Hence,~$e_{i(D)}\in E(H_{j(D)+1})$.
        Due to the definition of~$G$, this means that~$i(D)=j(D)+1$.
        
        Now fix~$j\in[\tau]$ with~$j\geq2$ and let~$v \in e_{j-1} \setminus e_j$. 
        By the above, every~$D \in \mathcal{F}_1^j$ must contain~$v$. 
        In particular, every~$D\in\cF_1^j$ contains the~$k+1$ vertices in the set~$e_j\cup v$.
        Since no~$D\in\cF$ satisfies~$i(D)=1$, it thus follows that for every~$j\in[\tau]$ we have~$
        \vert\cF_1^j\vert\leq n^{v(F)-(k+1)}$. 
        Summing over all~$j\in[\tau]$, we obtain
        \begin{align*}
            \vert\mathcal{F}_1\vert=\sum_{j\in[\tau]} \vert\cF_1^j\vert\leq \binom{n}{k} \cdot n^{v(F)-k-1}= n^{v(F)-1}\,,
        \end{align*}
        as desired.
    \end{proof}
    
    To show the remaining upper bounds, we first observe the following for~$\alpha\in\{2,3,4\}$.
    Since for every~$D\in\cF_{\alpha}$, it holds that~$|e_D \cup e_{i(D)}|\geq k+1$ and~$e_D \cup e_{i(D)}\subseteq V(D)$, we have~$\vert\{D\in\cF_{\alpha}^i:\:e_D=e\}\vert\leq n^{v(F)-\vert e_D\cup e_{i(D)}\vert}\leq n^{v(F)-k-1}$ for every~$e\in V^{(k)}$.
    Further, note the trivial fact that if for some~$e\in V^{(k)}$ there is no~$D\in\cF_{\alpha}^i$ with~$e_D=e$, then~$\vert\{D\in\cF_{\alpha}^i:\:e_D=e\}\vert=0$.
    Thus, we infer
    \begin{align}\label{eq:generalupper}
        |\cF_{\alpha}|&=\sum_{i\in[\tau]}\vert\cF_{\alpha}^i\vert=\sum_{i \in [\tau]} \sum_{e\in V^{(k)}}\vert\{D\in\cF_{\alpha}^i:\:e_D=e\}\vert\nonumber\\
        &\leq\sum_{i \in [\tau]}\big\vert\{e\in V^{(k)}:\:e=e_D\text{ for some }D\in\cF_{\alpha}^i\}\big\vert \cdot n^{v(F) - (k+1)}\,.
    \end{align}

    Next we show that every~$e\in V^{(k)}$ can only be counted once in the above sum.
    \begin{claim}\label{cl:disjointsets}
        For every~$\alpha\in\{2,3,4\}$ and distinct~$i,j\in[\tau]$ we have $$\{e\in V^{(k)}:\:e=e_D\text{ for some }D\in\cF_{\alpha}^i\}\cap\{e\in V^{(k)}:\:e=e_D\text{ for some }D\in\cF_{\alpha}^j\}=\emptyset\,.$$
    \end{claim}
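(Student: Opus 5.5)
The plan is to show that, within each of the Types~$2$, $3$, and~$4$, the index~$i(D)$ is a fixed function of~$\mathfrak{s}(e_D)$: namely $i(D)=\mathfrak{s}(e_D)$, $i(D)=\mathfrak{s}(e_D)-1$, or $i(D)=\mathfrak{s}(e_D)+1$, according to the type. Once this is established, fix~$\alpha$ and suppose some~$e$ equals~$e_D$ for a~$D\in\cF_\alpha^i$ and also~$e_{D'}$ for a~$D'\in\cF_\alpha^j$. Applying the relation for Type~$\alpha$ to both~$D$ and~$D'$ gives that~$i$ and~$j$ are the same function of~$\mathfrak{s}(e)$, so~$i=j$. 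This proves the claim.

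First I would check that~$\mathfrak{s}(e_D)$ is always defined, i.e.\ that~$e_D\in E(H_\tau)$. Since~$G\subseteq H_\tau$, we have~$D\subseteq H_\tau$, and~$D\cup e_D$ is a copy of~$F$. If~$e_D\notin E(H_\tau)$, then adding~$e_D$ to~$H_\tau$ creates a new copy of~$F$, so~$e_D\in E(H_{\tau+1})=E(H_\tau)$, a contradiction. I will also use repeatedly that~$e_i\in E(H_i)\setminus E(H_{i-1})$, so~$\mathfrak{s}(e_i)=i$. In particular, every edge of~$D$ other than~$e_{i(D)}$ lies in~$H_{j(D)}$, and all of~$D$ lies in~$H_{i(D)}$.

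The three types are then handled as follows.
\begin{itemize}
\item \textbf{Type~$4$.} By definition~$i(D)=\mathfrak{s}(e_D)$, so there is nothing to prove.
\item \textbf{Type~$3$.} Here~$\mathfrak{s}(e_D)>i(D)$, so~$e_D\notin E(H_{i(D)})$, while~$D\subseteq H_{i(D)}$. Thus~$D\cup e_D$ is a new copy of~$F$ in~$H_{i(D)}\cup e_D$. The definition of the process gives~$e_D\in E(H_{i(D)+1})$, hence~$\mathfrak{s}(e_D)=i(D)+1$.
\item \textbf{Type~$2$.} Here~$j(D)<\mathfrak{s}(e_D)<i(D)$. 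Put~$s=\mathfrak{s}(e_D)$. Then~$e_D\in E(H_s)$, and~$D-e_{i(D)}\subseteq H_{j(D)}\subseteq H_s$, while~$e_{i(D)}\notin E(H_s)$ because~$s<i(D)$. So~$(D-e_{i(D)})\cup e_D\cup e_{i(D)}=D\cup e_D$ is a new copy of~$F$ in~$H_s\cup e_{i(D)}$. Hence~$e_{i(D)}\in E(H_{s+1})$, which gives~$i(D)\le s+1$, and therefore~$i(D)=\mathfrak{s}(e_D)+1$.
\end{itemize}

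I do not expect a real obstacle here. The only points needing care are the well-definedness of~$\mathfrak{s}(e_D)$, handled above, and the strict inequalities, which guarantee that the relevant edge is genuinely absent at the earlier step, so that the copy of~$F$ is indeed new.
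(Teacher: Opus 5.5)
Your proposal is correct and follows essentially the same route as the paper. In each of Types~$2$, $3$, $4$ you show that $i(D)$ is determined by $\mathfrak{s}(e_D)$ (namely $\mathfrak{s}(e_D)=i(D)-1$, $i(D)+1$, $i(D)$ respectively), using the same ``a new copy of $F$ forces the missing edge in at the next step'' arguments; your explicit check that $e_D\in E(H_\tau)$, so that $\mathfrak{s}(e_D)$ is defined, is a small detail the paper leaves implicit.
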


    \begin{proof}
        Observe that for every~$D\in\cF_2$ we must have~$\mathfrak{s}(e_D)=i(D)-1$.
        Indeed, by the definition of Type~$2$ and~$G$, we know that in~$H_{\mathfrak{s}(e_D)}\cup e_{i(D)}$ there is a copy of~$F$ containing~$e_{i(D)}$.
        Keeping in mind that~$\mathfrak{s}(e_D)<i(D)$ and the definition of the~$F$-process, we conclude~$i(D)=\mathfrak{s}(e_D)+1$.
        Therefore, once we fix~$i\in[\tau]$, we know that for every~$D\in\cF_2^i$ we must have~$\mathfrak{s}(e_D)=i-1$.
        This in turn entails that the sets~$\{e\in V^{(k)}:\:e=e_D\text{ for some }D\in\cF_2^i\}$ are disjoint for distinct~$i\in[\tau]$.
        
        Note that for every~$D\in\cF_3$ we know that~$H_{i(D)}\cup e_D$ contains a copy of~$F$ with~$e_D$ being an edge of this copy, and that~$e_D\notin E(H_{i(D)})$ by the definition of Type~$3$.
        Hence, the definition of the~$F$-process implies that~$e(D)$ is added in the next step, i.e.,~$\mathfrak{s}(e_D)=i(D)+1$.
        Therefore, once we fix~$i\in[\tau]$, we know that for every~$D\in\cF_3^i$ we must have~$\mathfrak{s}(e_D)=i+1$.
        This in turn entails that the sets~$\{e\in V^{(k)}:\:e=e_D\text{ for some }D\in\cF_3^i\}$ are disjoint for distinct~$i\in[\tau]$.
        
        For~$D\in\cF_4$, we have~$\mathfrak{s}(e_D)=i(D)$ by definition of Type~$4$.
        Therefore, once we fix~$i\in[\tau]$, we know that every~$D\in\cF_4^i$ must satisfy~$\mathfrak{s}(e_D)=i$ and the claim follows.
    \end{proof}

    Claim~\ref{cl:disjointsets} reveals that for~$\alpha\in\{2,3,4\}$ we have $$\sum_{i \in [\tau]}\big\vert\{e\in V^{(k)}:\:e=e_D\text{ for some }D\in\cF_{\alpha}^i\}\big\vert\leq\binom{n}{k}\,.$$
    Combining this with~\eqref{eq:generalupper} yields
    \begin{align}\label{eq:Falpha}
        \vert\cF_{\alpha}\vert\leq n^{v(F)-(k+1)}\cdot\binom{n}{k}\leq n^{v(F)-1}\,.
    \end{align}

    Comparing the bounds in~\eqref{eq:Falpha} and the bound in Claim~\ref{cl:F1} with~\eqref{eq:Flower} entails
    $$\delta n^{v(F)}\leq\vert\cF\vert\leq\sum_{\alpha\in[4]}\vert\cF_{\alpha}\vert\leq 5\cdot n^{v(F)-1}\,.$$
    Due to the chosen hierarchy, this is a contradiction.
    \end{proof}

    \section*{Acknowledgements}
	The research was supported by the Postdoctoral Fellowship Program of CPSF under Grant Number GZC20252020, the China Postdoctoral Science Foundation 2025M783118 (Weichan Liu), by RVO:67985807 and the Czech Science Foundation Project 26-23695S (Simón Piga), and by the Young Scientist Fellowship IBS-R029-Y7 (Bjarne Sch\"ulke).
    
    No AI tools have been used for any aspects of this work.

\end{document}